\newtheorem{thm}{Theorem}[section]
\newtheorem{prop}[thm]{Proposition}
\newtheorem{cor}[thm]{Corollary}
\newtheorem{definition}{Definition}[section]
\def\ord{\mathrm{ord}}
\def\min{\mathrm{min}}
\def\GL{\mathrm{GL}}
\def\PGL{\mathrm{PGL}}
\def\det{\mathrm{det}}
\def\a{\textbf{a}}
\author{Lucien Szpiro}
\address{Lucien Szpiro\\
        Ph.D. Program in Mathematics\\
        CUNY Graduate Center\\
        365 Fifth Avenue, New York, NY 10016-4309 U.S.A.}
\email{lszpiro@gc.cuny.edu}
\author{Michael Tepper}
\address{Michael Tepper\\
        Division of Science and Engineering\\
        Penn State Abington\\
        1600 Woodland Road\\
        Abington, PA 19001 U.S.A.}
\email{mlt16@psu.edu}
\author{Phillip Williams}
\address{Phillip Williams\\
        The King's College\\
        52 Broadway, 5th Fl, New York, NY 10004}
\email{pwilliams@tkc.edu}
\title[Semi-stable implies Minimal]{Semi-stable reduction implies minimality of the resultant}
\date{\today}
\keywords{minimal resultant, bad reduction, semi-stable reduction}
\begin{document}

\maketitle

\begin{abstract}
For a dynamical system on $\mathbb{P}^n$ over a number field or a function field, we show that semi-stable reduction implies the minimality of the resultant. We use this to show that every such dynamical system over a number field admits a globally minimal presentation. 
\end{abstract}

\section{Introduction}

Let $K$ be a number field or a function field of a complete nonsingular curve defined over an algebraically closed field $k$ and $$\varphi: \mathbb{P}^n \rightarrow \mathbb{P}^n$$ be a morphism of degree $d$, defined over $K$. Let $p$ be a closed point of the one dimensional scheme associated to $K$, which is either the spectrum of the ring of integers of a number field or the curve associated to the the function field.   One natural notion of good reduction at $p$ of a morphism is: when reducing the coefficients that define the morphism for a suitably chosen pair of homogeneous coordinates $[X_0,\dots,X_n]$ on $\mathbb{P}^n$, we get a map over the residue field of the same degree. This notion of good reduction is measured by the minimal resultant, defined below.

When considering good reduction for morphisms over global fields as above, we are interested in both local and global questions. One local question is: when does a morphism have good reduction at a particular $p$? Related to this is the more precise question: what is the order of vanishing of the minimal resultant at $p$? A natural global question to ask is the following: is it possible to find a choice of coordinates which realizes the minimal resultant at all points? This is akin to the minimal model question in the theory of curves. 

Both of these questions have been explored in \cite{SilvermanDynamics} and \cite{STW}. More recently, \cite{BruinMolnar} has provided useful algorithms for computing the minimal resultant locally, in the case of $\mathbb{P}^1$. 

In \cite{STW}, a GIT based criterion for minimality of the resultant is given. Specifically, we show that semi-stable reduction implies minimality for quadratic maps on $\mathbb{P}^1$ over a function field. The number field case is also addressed, but only proved for certain families of maps.

Here, we will further investigate both the local and the global question.  Strengthening the results of \cite{STW}, we prove that semi-stable reduction implies minimality of the resultant for maps on $\mathbb{P}^n$ of any degree over either a function field or a number field. More precisely, given a choice of coordinates on projective space for which the reduction of our morphism at point $p$ is semi-stable in the parameter space for rational maps, we show that this choice of coordinates also yields a minimal value for the order of vanishing of the resultant. This yields an easy test for minimality, and thus for good reduction, which should be useful for concrete examples. In addition, it allows one to determine \textit{potential} good reduction (good reduction after base extension) algorithmically, in the case of $\mathbb{P}^1$. This is due to Levy, who has shown in \cite{Levy} and \cite{Levy3}, the existence of a semi-stable presentation, after base extension, and has provided an algorithm that concretely computes it in the $\mathbb{P}^1$ case. 

In regards to the global questions, our main result, combined with the result of Levy \cite{Levy} mentioned above, gives us a way of finding, after base extension, a globally minimal choice of coordinates for a morphism in the number field setting.

\textit{Acknowledgements}: The authors would like to thank Alon Levy, Nikita Miasnikov, and Bart Van Steirteghem for helpful comments and discussions.
\section{Setup and Preliminaries}

In this section, we give our basic setup and give some preliminaries required for the results in the following sections.

As above, let $K$ be a number field or a function field  of a complete nonsingular curve defined over an algebraically closed field $k$, and let $$\varphi: \mathbb{P}^n \rightarrow \mathbb{P}^n$$ be a morphism of degree $d^n$, defined over $K$. Recall that morphisms of degree $d^n$ with respect to a fixed coordinate system $[X_0,\dots,X_n]$ on $\mathbb{P}^n$ are parameterized by an open subset $\mathrm{Hom}_d^n$ of points in the projective space $\mathbb{P}^N$ corresponding to the coefficients of the $n+1$ homogeneous polynomials of degree $d$, where $N= \binom{n+d}{d}(n+1)-1$ . This open set is $\mathbb{P}^N - V(\rho)$, where $\rho$ is the resultant of the $n+1$ polynomials. Thus, after a choice of $[X_0,\dots,X_n]$, our $\varphi$ corresponds to some point $[\mathbf{a}_0,\dots,\mathbf{a}_n]$ in projective space $\mathbb{P}^N$. Here $\mathbf{a}_i = a_{i,0}, \dots, a_{i,D}$, where $D=\binom{n+d}{d}$. We call this point $[\mathbf{a}_0,\dots,\mathbf{a}_n]$ a \textbf{presentation} of $\varphi$ (with respect to $[X_0,\dots,X_n]$). The choice of $[X_0,\dots,X_n]$ determines the presentation, but the coordinates $\mathbf{a}_0,\dots, \mathbf{a}_n$ are determined only up to scalar multiple. 

In general (over any ring or field), the different choices of $[X_0,\dots,X_n]$ induce an action of the group $\mathrm{PGL}_{n+1}$ on $\mathbb{P}^N$. For $[\Gamma]$ in $\mathrm{PGL}_{n+1}(K)$, we write $[\mathbf{a}_0^{\Gamma},\dots,\mathbf{a}_n^{\Gamma}]$ for the new coefficients under the action of $[\Gamma]$. The $\mathbf{a}_0^{\Gamma},\dots,\mathbf{a}_n^{\Gamma}$ are obtained from $\mathbf{a}_0,\dots,\mathbf{a}_n$ by precomposing $\varphi$ written with respect to $[X_0,\dots,X_n]$ with $\Gamma$, and then post-composing with the adjoint of $\Gamma$. This is not very practical to explicitly state, especially for large $d$, but each new coefficient is a just homogeneous polynomial in the variables $\mathbf{a}_0,\dots,\mathbf{a}_n$, and the entries $\alpha_{i,j}$ of the $(n+1)\times(n+1)$ matrix $\Gamma = \left(\alpha_{i,j}\right)$. 

Given $p$, a point of $C = \mathrm{Spec}(O_K)$ or a point of the curve $C$ (where $K = k(C)$, $k$ algebraically closed) for the number field case and function field case respectively, we can choose $(\mathbf{a}_0,\dots,\mathbf{a}_n)$ to be \textit{normalized} at $p$. This means $\mathbf{a}_0,\dots,\mathbf{a}_n$ are in the local ring at $p$ and hence do not all vanish when reduced modulo $p$. The reduction modulo $p$ is written as $[\mathbf{a}_0(p),\dots,\mathbf{a}_n(p)]$, where $(\mathbf{a}_0,\dots,\mathbf{a}_n)$ are taken to be normalized at $p$; under this restriction, it is a well defined point of projective space.  In addition, $[\mathbf{a}_0(p),\dots,\mathbf{a}_n(p)]$ will correspond to a morphism over $\kappa (p)$ if and only if the resultant $\rho(\mathbf{a}_0(p), \dots,\mathbf{a}_n(p)) \neq 0$. 

The reduction so defined depends on $[X_0,\dots,X_n]$, and so the question of whether the reduction corresponds to a morphism over the residue field also depends on this choice. We can eliminate this dependence by considering all possible choices of coordinates on $\mathbb{P}^n$.

\begin{definition}We say that $\varphi$ has \textbf{good reduction} at $p$ if there exists a choice of coordinates $[X_0,\dots,X_n]$ and $(\mathbf{a}_0,\dots,\mathbf{a}_n)$ normalized such that $\rho(\mathbf{a}_0(p),\dots,\mathbf{a}_n(p)) \neq 0$. The morphism $\varphi$ has \textbf{bad reduction} otherwise. 
\end{definition}

This is all measured by the minimal resultant: 

\begin{definition}
Given a choice of coordinates $[X_0,\dots,X_n]$ and corresponding normalized $(\mathbf{a}_0,\dots,\mathbf{a}_n)$ such that $[\mathbf{a}_0,\dots,\mathbf{a}_n]$ is a presentation of $\varphi$ with respect to $[X_0,\dots,X_n]$, define $(R_{\varphi, [X_0,\dots,X_n]})_p := \mathrm{ord}_p(\rho(\mathbf{a}_0,\dots\mathbf{a}_n))$.  The \textbf{minimal resultant} is then the following divisor: 
$$R_{\varphi} = \sum_p \underset{[X_0,\dots,X_n]}{\mathrm{min}} \{(R_{\varphi, [X_0,\dots,X_n]})_p\}[p]$$ 
\end{definition}
The minimal resultant vanishes precisely at the points of good reduction. 

Sometimes it is more natural to consider whether a morphism has good reduction after an algebraic extension. 

\begin{definition} We say that $\varphi$ has \textbf{potential good reduction} at $p$ if there exists an algebraic extension $K'$ of $K$ such that $\varphi$, considered as a morphism over $K'$, has good reduction at some $p'$ lying over $p$. 
\end{definition}

There is an open subscheme $(\mathbb{P}^N)^{ss} \subset \mathbb{P}^N$ which is ``nice" in the sense that it is possible to define a GIT categorical quotient $\mathcal{M}_d^{n,ss}$ with respect to these points under the group action described above. It contains $\mathrm{Hom}^n_d$. In general the GIT semi-stable points are, intuitively, the points for which one cannot ``push the point to $0$" using the group action. Relatedly, every point in the semi-stable space is non-vanishing on some homogeneous form for which the group in question is invariant. This is the key property we will use in the proof of our main result. For more on the GIT construction of the moduli space in question, see \cite{SilvermanSpace} for the $n=1$ case, and \cite{PST} and \cite {Levy2} for the general case. 


We will need the following concrete characterizations of the moduli space and its compactification, which we may take throughout as their definitions:
\begin{prop}\label{defineMd}
The space $\mathcal{M}_d^n$ is isomorphic to $\mathrm{Spec}((A_d)_{(\rho)}^{\mathrm{SL}(n+1)})$, and the space $\mathcal{M}_d^{n, ss}$ is isomorphic to $\mathrm{Proj}((A_d)_{\rho}^{\mathrm{SL}(n+1)})$, where $A_d=\mathbb{Z}[\mathbf{a}_0,\dots,\mathbf{a}_n]$ and $\mathbf{a}_0,\dots,\mathbf{a}_n$ are each $\binom{n+d}{d}$ indeterminants.
\end{prop}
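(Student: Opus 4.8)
The plan is to recognize both isomorphisms as the GIT quotients of the $\mathrm{SL}_{n+1}$-action of Section 2, and then to match these quotients with the explicit rings in the statement; since we are permitted to take the GIT constructions of \cite{SilvermanSpace}, \cite{PST} and \cite{Levy2} as given, the work is largely one of bookkeeping and identification. First I would record that the $\mathrm{PGL}_{n+1}$-action on $\mathbb{P}^N = \mathrm{Proj}(A_d)$ lifts to a linear, grading-preserving action of $\mathrm{SL}_{n+1}$ on $A_d = \mathbb{Z}[\mathbf{a}_0,\dots,\mathbf{a}_n]$; because $\mathrm{SL}_{n+1}$ has no nontrivial characters, the linearization on $\mathcal{O}(1)$ is canonical and the graded invariant ring $A_d^{\mathrm{SL}_{n+1}}$ is unambiguously defined. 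I would then invoke geometric reductivity of $\mathrm{SL}_{n+1}$ over $\mathbb{Z}$ (Haboush, Seshadri) to ensure that all invariant rings appearing below are finitely generated and that the categorical quotients exist over the arithmetic base.

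Next I would treat the two cases. The cornerstone is that the resultant $\rho$ is a nonzero homogeneous $\mathrm{SL}_{n+1}$-invariant: classically it is a relative invariant transforming by a power of $\det(\Gamma)$ under $\Gamma \in \mathrm{GL}_{n+1}$, hence is genuinely invariant once $\det(\Gamma) = 1$. For the affine statement, this makes $\mathrm{Hom}_d^n = \mathbb{P}^N \setminus V(\rho)$ an $\mathrm{SL}_{n+1}$-invariant affine open, equal to $\mathrm{Spec}\bigl((A_d)_{(\rho)}\bigr)$, where $(A_d)_{(\rho)}$ is the degree-zero part of the localization at $\rho$; since a geometrically reductive group acting on an affine scheme has categorical quotient the spectrum of its invariants, passing to invariants yields $\mathcal{M}_d^n \cong \mathrm{Spec}\bigl((A_d)_{(\rho)}^{\mathrm{SL}_{n+1}}\bigr)$. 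For the projective statement, Mumford's theorem identifies the quotient of the semistable locus $(\mathbb{P}^N)^{ss}$ with the $\mathrm{Proj}$ of the graded invariant ring $A_d^{\mathrm{SL}_{n+1}}$; the remaining task is to match this with the displayed ring $(A_d)_\rho^{\mathrm{SL}_{n+1}}$, using that invariance of $\rho$ lets the formation of invariants commute with localization so that $(A_d)_\rho^{\mathrm{SL}_{n+1}} = \bigl(A_d^{\mathrm{SL}_{n+1}}\bigr)_\rho$, and checking that the distinguished chart $\{\rho \neq 0\}$ of the resulting $\mathrm{Proj}$ reproduces the affine quotient $\mathcal{M}_d^n$ of the first part.

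The main obstacle is twofold. First, the arithmetic base: over a field one could quote Nagata and Mumford directly, but $A_d$ lives over $\mathbb{Z}$, so one must use geometric reductivity over a general base to secure finite generation and the existence of the quotients, and one must verify that taking invariants commutes with the localizations at $\rho$ — which is exactly why it is essential that $\rho$ be invariant rather than merely semi-invariant. Second, and more delicate, one must verify that the semistable locus $(\mathbb{P}^N)^{ss}$ is precisely the non-vanishing locus of the positive-degree invariants, so that the $\mathrm{Proj}$ genuinely computes $\mathcal{M}_d^{n,ss}$ and contains $\{\rho \neq 0\} = \mathrm{Hom}_d^n$ as a proper open subset; the properness of this inclusion — there are semistable presentations with vanishing resultant — is exactly what separates the projective compactification from the affine quotient, and it must not be conflated with good reduction. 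Once these identifications are in place, both isomorphisms follow formally from the GIT dictionary.
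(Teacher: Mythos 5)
Your proposal is correct and takes essentially the same route as the paper: the paper's proof simply cites the GIT quotient constructions of \cite{SilvermanSpace}, \cite{PST} and \cite{Levy2} together with Seshadri's geometric reductivity over an arbitrary base \cite{Seshadri}, which is precisely the machinery you unpack ($\mathrm{SL}_{n+1}$-invariance of $\rho$, Spec of invariants for the affine chart, Proj of the graded invariant ring for the semistable quotient). One remark: the subscript in the statement's $\mathrm{Proj}((A_d)_{\rho}^{\mathrm{SL}(n+1)})$ is best read as a typo for $\mathrm{Proj}(A_d^{\mathrm{SL}(n+1)})$ --- the form the paper itself uses in the proof of its main theorem --- so your effort to ``match'' the localized ring with the Proj is reconciling a notational inconsistency in the statement rather than supplying a genuinely missing mathematical step.
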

\begin{proof}
The fact that  $\mathcal{M}_d^n$ and $\mathcal{M}_d^{n, ss}$ are geometric and categorical quotients respectively is established in  \cite{Levy2}, \cite{PST} and \cite{SilvermanSpace}. This explicit description follows for such quotients in the setting we are in.  The $n=1$ case is mentioned in \cite{SilvermanSpace}, which follows just as well for arbitrary $n$ based on the results in \cite{Seshadri}. In particular, see Theorem 4 and the following remark 8. 
\end{proof}

We will also need a generalization to $\mathbb{P}^n$ of some basic results about how the order of vanishing of the resultant is affected by conjugation. The following is a straightforward generalization of Proposition 4.95 in \cite{SilvermanDynamics}: 
\begin{prop}\label{prop495}
Let $\varphi:\mathbb{P}^n\rightarrow\mathbb{P}^n$ be a degree $d^n$ endomorphism with a presentation $[\mathbf{a}_0,\dots,\mathbf{a}_n]$ and $p$ a point of $C = \mathrm{Spec}(O_K)$ or a point of the curve $C$ as before, where $K = k(C)$.
\begin{quote}
  
\textbf{(a)}    The valuation of the resultant of $\varphi$ is given by the formula
$$\ord_p(R_\varphi)=\ord_p(\rho(\a_0,\dots,\a_n))-(n+1)d^n\min \{\ord_p(\a_0),\dots,\ord_p(\a_n)\}.$$
Here $\ord_p(\a_i)$ is the minimal order of vanishing at $p$ of any of the coordinates of the tuple $\a_i$. Note that normalized coefficients are not assumed in this statement (indeed the statement is trivial in that case).

\textbf{(b)} Let $\Gamma \in\GL_{n+1}(K)$.  Then
$$\ord_p(\rho(\a_0^\Gamma,\dots,\a_n^\Gamma))=\ord_p(\rho(\a_0,\dots,\a_n))+(n+d)d^n\ord_p(\det\ \Gamma),$$
$$\min\{\ord_p(\a_0^\Gamma),\dots,\ord_p(\a_n^\Gamma)\}\geq \min\{\ord_p(\a_0),\dots,\ord_p(\a_n)\}+(d+1)\ord_p(\Gamma).$$

\textbf{(c)} If $U\in\GL_{n+1}(O_{K,p})$, then
$$\ord_p(\rho(\a_0^U,\dots,\a_n^U))=\ord_p(\rho(\a_0,\dots,\a_n)),$$
$$\min\{\ord_p(\a_0^U),\dots,\ord_p(\a_n^U)= \min\{\ord_p(\a_0),\dots,\ord_p(\a_n)\}.$$
\end{quote}
\end{prop}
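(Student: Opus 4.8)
The plan is to prove the three parts in order, since (a) is the key computational input and (b) follows from it, while (c) is the special case of (b) that makes the order of vanishing genuinely invariant. I would begin with part (a). The resultant $\rho$ of the $n+1$ homogeneous degree-$d$ polynomials is itself a homogeneous polynomial in the coefficient tuples $\mathbf{a}_0,\dots,\mathbf{a}_n$; the classical fact (which I would cite or recall from the theory of resultants) is that $\rho$ is homogeneous of degree $d^n$ in the coefficients of each individual form $\mathbf{a}_i$, so it has total weighted structure reflecting this. The definition $R_\varphi = \sum_p \min_{[X_0,\dots,X_n]}\{(R_{\varphi,[X]})_p\}[p]$ and the normalization convention mean $\ord_p(R_\varphi)$ is computed from \emph{normalized} coefficients, where $\min\{\ord_p(\mathbf{a}_0),\dots,\ord_p(\mathbf{a}_n)\}=0$. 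To pass from arbitrary coefficients to normalized ones, I would factor out the common power $\pi^{m}$ (with $\pi$ a uniformizer at $p$ and $m=\min\{\ord_p(\mathbf{a}_i)\}$), and use multihomogeneity: scaling all coordinates of $\mathbf{a}_i$ by $\pi^{m}$ multiplies $\rho$ by $\pi^{d^n m}$ for each of the $n+1$ tuples, giving the correction term $(n+1)d^n\min\{\ord_p(\mathbf{a}_0),\dots,\ord_p(\mathbf{a}_n)\}$. This yields (a) directly.

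For part (b), the first formula is the substantive one. Here I would rely on the transformation law for how the resultant of a composite behaves. The new coefficients $\mathbf{a}_i^\Gamma$ arise by precomposing with $\Gamma$ and postcomposing with the adjoint $\mathrm{adj}(\Gamma)$; the governing identity is a multiplicativity property of the resultant under composition of maps, of the shape $\rho(\mathbf{a}^\Gamma) = (\det\Gamma)^{e}\,\rho(\mathbf{a})$ up to the relevant exponent. The exponent $e$ is forced to be $(n+d)d^n$ by homogeneity/degree bookkeeping: the resultant of the composite $\mathrm{adj}(\Gamma)\circ\varphi\circ\Gamma$ collects factors of $\det\Gamma$ from both the pre- and post-composition, and one tallies these using the known multiplicativity of the resultant with respect to linear substitutions on the source and the linear map on the target. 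Taking $\ord_p$ of both sides turns the multiplicative factor $(\det\Gamma)^{(n+d)d^n}$ into the additive term $(n+d)d^n\ord_p(\det\Gamma)$. The second inequality of (b) is softer: each entry of $\mathbf{a}_i^\Gamma$ is a homogeneous polynomial (of the appropriate degree in the $\alpha_{i,j}$ and degree one in the coordinates of a single $\mathbf{a}_j$) in the old coefficients and the matrix entries, and $\ord_p$ of a sum is at least the minimum of the $\ord_p$ of the terms. Tracking the degrees — linear in the $\mathbf{a}_j$, and with a total of $d+1$ factors coming from $\Gamma$ (one from precomposition by $\Gamma$ on each of the $d$ linear factors of a degree-$d$ form, plus one from the adjoint) — gives the stated lower bound with coefficient $d+1$ on $\ord_p(\Gamma)$, where $\ord_p(\Gamma)=\min_{i,j}\ord_p(\alpha_{i,j})$.

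Part (c) is then immediate from (b). If $U\in\GL_{n+1}(O_{K,p})$, then $\det U$ is a unit in $O_{K,p}$, so $\ord_p(\det U)=0$ and the first formula of (b) collapses to equality of the resultant valuations. For the $\min$, the inequality of (b) gives $\ge$ in one direction with $\ord_p(U)=0$ (since the entries of $U$ lie in $O_{K,p}$, none has negative valuation); applying the same inequality to $U^{-1}$, which also lies in $\GL_{n+1}(O_{K,p})$ because $\det U$ is a unit, gives the reverse inequality, forcing equality. The main obstacle I anticipate is pinning down the precise exponent $(n+d)d^n$ and the coefficient $d+1$ rigorously: these are clean in dimension one (Silverman's Proposition 4.95) but for general $\mathbb{P}^n$ one must carefully invoke the multihomogeneity degree of the multivariate resultant and account for the adjoint (whose entries are degree-$n$ minors, contributing $\det\Gamma = \Gamma\cdot\mathrm{adj}(\Gamma)$ relations) rather than the inverse. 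I would make sure the degree of $\rho$ in each $\mathbf{a}_i$ is correctly taken to be $d^n$ and that the composition law for the resultant under both source and target linear changes is cited in the form valid over an arbitrary base, so that the argument is uniform across the number field and function field cases.
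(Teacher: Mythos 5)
Your overall route is exactly the paper's: the paper's entire proof is a one-line reduction to Silverman's Proposition 4.95 (``the same with a change in regards to the coefficients related to the degree''), and what you write out---multihomogeneity of the Macaulay resultant, of degree $d^n$ in the coefficients of each form, for (a); the transformation laws of the resultant under a linear substitution on the source, contributing $(\det\Gamma)^{d^{n+1}}$, and under a linear combination of the target forms, contributing $(\det\mathrm{adj}(\Gamma))^{d^n}=(\det\Gamma)^{nd^n}$, for (b); and the specialization $\ord_p(\det U)=0$, $\ord_p(U)=\ord_p(U^{-1})=0$, together with conjugating back by $U^{-1}$, for (c)---is precisely that generalized Silverman argument. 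Parts (a), the first formula of (b) (note $nd^n+d^{n+1}=(n+d)d^n$, confirming the stated exponent), and (c) are correct as you sketch them.

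There is, however, a genuine gap in your treatment of the second inequality of (b), and it is one your own closing paragraph flags without resolving. You justify the coefficient $d+1$ by counting ``$d$ factors from precomposition plus one from the adjoint,'' but the adjoint (adjugate) of an $(n+1)\times(n+1)$ matrix has entries that are degree-$n$ minors, as you yourself note later. Each coefficient of $\mathbf{a}_i^\Gamma$ is linear in the old coefficients, of degree $d$ in the entries of $\Gamma$ from precomposition, and of degree $n$ (not $1$) in those entries from the adjugate, so the bound this argument actually yields is $\min\{\ord_p(\mathbf{a}_i^\Gamma)\}\geq\min\{\ord_p(\mathbf{a}_i)\}+(d+n)\,\ord_p(\Gamma)$. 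The two bounds genuinely differ when $\ord_p(\Gamma)<0$: for $n=2$ and $\Gamma=\pi^{-1}I$ one has $\mathrm{adj}(\Gamma)=\pi^{-2}I$ and $F_i(\pi^{-1}X)=\pi^{-d}F_i(X)$, hence $\mathbf{a}_i^\Gamma=\pi^{-(d+2)}\mathbf{a}_i$, violating the stated $(d+1)$-inequality. In other words, the constant $d+1$ in the proposition is an artifact of the $n=1$ case (where the adjugate is linear in the entries) that was not adjusted in the generalization, and no correct degree count will produce it; the honest conclusion of your argument is the $(d+n)$ version, and you should state and prove that instead rather than paper over the discrepancy. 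This correction is harmless downstream: in part (c) and in the paper's main theorem one only applies the inequality to matrices with entries in the local ring and unit determinant, for which $\ord_p(U)=0$ and the two versions coincide.
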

The proof is exactly the same as in \cite{SilvermanDynamics} with a change in regards to the coefficients related to the degree.


\section{Semi-stability implies minimality}

Here, we show semi-stability implies minimality. To state the result, we make the following definitions:

\begin{definition} A presentation $[\mathbf{a}_0,\dots,\mathbf{a}_n]$ of $\varphi$ is \textbf{semi-stable} at $p$ if the reduction $[\mathbf{a}_0(p),\dots,\mathbf{a}_n(p)]  \in (\mathbb{P}^{N})^{ss}(\kappa(p))$. 
\end{definition}

\begin{definition} A presentation $[\mathbf{a}_0,\dots,\mathbf{a}_n]$ of $\varphi$ is \textbf{minimal} at $p$ if $(R_{\varphi, [X_0,\dots,X_n]})_p$ is minimal with respect to all choices of coordinates on $\mathbb{P}^n(K)$. 
\end{definition}

In \cite{STW}, the following are shown:

\begin{prop}
Let $K$ be a function field, and $p \in C$. Let $\varphi:\mathbb{P}^1\rightarrow\mathbb{P}^1$ be a morphism of degree $2$. Let $[\mathbf{a},\mathbf{b}]$ be a presentation of $\varphi$. If $[\mathbf{a},\mathbf{b}]$ is semi-stable at $p$, then $[\mathbf{a},\mathbf{b}]$ is minimal at $p$. 
\end{prop}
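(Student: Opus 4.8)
The plan is to reduce the minimality statement to a single numerical inequality and then to extract that inequality from the defining property of the semi-stable locus. Fix the normalized presentation $[\mathbf{a},\mathbf{b}]$, so that $\min\{\ord_p(\mathbf{a}),\ord_p(\mathbf{b})\}=0$ and $(R_{\varphi,[X_0,X_1]})_p=\ord_p(\rho(\mathbf{a},\mathbf{b}))$. Let $\Gamma\in\GL_2(K)$ be an arbitrary change of coordinates. Applying Proposition~\ref{prop495}(a) to the (not necessarily normalized) coordinates $(\mathbf{a}^\Gamma,\mathbf{b}^\Gamma)$ and then Proposition~\ref{prop495}(b), I would write
$$(R_{\varphi,[X_0,X_1]^\Gamma})_p=\ord_p(\rho(\mathbf{a},\mathbf{b}))+(n+d)d^n\ord_p(\det\Gamma)-(n+1)d^n\min\{\ord_p(\mathbf{a}^\Gamma),\ord_p(\mathbf{b}^\Gamma)\}.$$
Comparing with $(R_{\varphi,[X_0,X_1]})_p=\ord_p(\rho(\mathbf{a},\mathbf{b}))$, minimality is therefore equivalent to the bound
$$(n+d)\,\ord_p(\det\Gamma)\ \geq\ (n+1)\min\{\ord_p(\mathbf{a}^\Gamma),\ord_p(\mathbf{b}^\Gamma)\}$$
holding for every $\Gamma$ (here $n=1$, $d=2$, but nothing in the argument uses this). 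Note that Proposition~\ref{prop495}(b) only gives a \emph{lower} bound on the right-hand minimum, which is the wrong direction; the real content is an \emph{upper} bound on that minimum, and this is exactly what semi-stability supplies.

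Next I would invoke the hypothesis. Since $[\mathbf{a}(p),\mathbf{b}(p)]\in(\mathbb{P}^N)^{ss}(\kappa(p))$, by the characterization of the semi-stable locus (and since by Proposition~\ref{defineMd} the invariants may be taken with integer coefficients) there is an $\SL_2$-invariant homogeneous form $F\in\mathbb{Z}[\mathbf{a},\mathbf{b}]$, say of degree $e>0$, with $F(\mathbf{a}(p),\mathbf{b}(p))\neq0$; equivalently $\ord_p(F(\mathbf{a},\mathbf{b}))=0$. Two facts about $F$ drive the argument. First, because $F$ is $\SL_2$-invariant it is a relative invariant for $\GL_2$, transforming by a power of the determinant exactly as the resultant does in Proposition~\ref{prop495}(b); matching the scalar (central) action shows the weight is $e(n+d)/(n+1)$, so that
$$\ord_p(F(\mathbf{a}^\Gamma,\mathbf{b}^\Gamma))=\ord_p(F(\mathbf{a},\mathbf{b}))+\tfrac{(n+d)e}{n+1}\ord_p(\det\Gamma)=\tfrac{(n+d)e}{n+1}\ord_p(\det\Gamma).$$
Second, since $F$ is homogeneous of degree $e$ with integral coefficients, every monomial of $F(\mathbf{a}^\Gamma,\mathbf{b}^\Gamma)$ is a product of $e$ of the conjugated coefficients, whence
$$\ord_p(F(\mathbf{a}^\Gamma,\mathbf{b}^\Gamma))\ \geq\ e\cdot\min\{\ord_p(\mathbf{a}^\Gamma),\ord_p(\mathbf{b}^\Gamma)\}.$$

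Combining these two displays and cancelling the positive factor $e$ yields exactly
$$\tfrac{n+d}{n+1}\ord_p(\det\Gamma)\ \geq\ \min\{\ord_p(\mathbf{a}^\Gamma),\ord_p(\mathbf{b}^\Gamma)\},$$
which is the required inequality, so $(R_{\varphi,[X_0,X_1]^\Gamma})_p\geq(R_{\varphi,[X_0,X_1]})_p$ for all $\Gamma$ and the presentation is minimal. I expect the main obstacle to be the clean formulation and verification of the weight computation for $F$: one must know that an $\SL_2$-invariant section is automatically a determinant-relative invariant for $\GL_2$ with the stated weight, and that a suitable $F$ can be chosen over $\mathbb{Z}$ (equivalently over $O_{K,p}$), so that its nonvanishing at the reduction genuinely gives $\ord_p(F(\mathbf{a},\mathbf{b}))=0$ while its coefficients contribute nonnegative valuations to the monomial estimate. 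Everything else is formal manipulation of Proposition~\ref{prop495}. It is worth remarking that this argument is insensitive to $n$ and $d$, which is why the same strategy will prove the general theorem; the role of the $\mathbb{P}^1$, degree-$2$ hypothesis here is only to match the setting of \cite{STW}.
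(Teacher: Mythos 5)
Your proof is correct, and its core input is identical to the paper's: semi-stability of the reduction supplies a homogeneous $\SL_2$-invariant $F\in\mathbb{Z}[\mathbf{a},\mathbf{b}]$ of degree $e>0$ with $\ord_p(F(\mathbf{a},\mathbf{b}))=0$, and integrality of its coefficients gives the lower bound $\ord_p(F(\mathbf{a}^\Gamma,\mathbf{b}^\Gamma))\geq e\min\{\ord_p(\mathbf{a}^\Gamma),\ord_p(\mathbf{b}^\Gamma)\}$ (this is the same observation as the paper's ``$f$ evaluated at a tuple in the local ring has valuation $r\geq 0$''). Where you diverge is in the bookkeeping. You compute the determinant weight of $F$ explicitly via the central action and feed it, together with Proposition~\ref{prop495}(a),(b), into the inequality $(n+d)\ord_p(\det\Gamma)\geq(n+1)\min\{\ord_p(\mathbf{a}^\Gamma),\ord_p(\mathbf{b}^\Gamma)\}$, verified against \emph{every} $\Gamma$. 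The paper instead raises $f$ to the power $\delta=(n+1)d^n$ and forms the degree-zero ratio $\sigma=f^{\delta}/\rho^m$, which is $\GL_{n+1}$-invariant (hence $\PGL_{n+1}(K)$-invariant) automatically, precisely because the determinant weights of numerator and denominator cancel; comparing $\ord_p(\sigma)$ at the semi-stable presentation and at a normalized minimal one then yields $S\geq S'$ with no weight computation at all. So the paper's ratio trick is designed exactly to sidestep the one step you flagged as the main obstacle, namely that a homogeneous $\SL_{n+1}$-invariant is a relative $\GL_{n+1}$-invariant of weight $e(n+d)/(n+1)$. That claim is true and your central-action argument is the right justification, but to make it rigorous (and to avoid a possibly fractional exponent) you should verify the polynomial identity $F(\mathbf{a}^\Gamma,\mathbf{b}^\Gamma)^{n+1}=(\det\Gamma)^{(n+d)e}\,F(\mathbf{a},\mathbf{b})^{n+1}$, e.g.\ by writing $\Gamma=\lambda U$ with $U\in\SL_{n+1}(\bar{K})$ over the algebraic closure and using Zariski density, and only then take valuations. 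In exchange, your version buys two things: it proves minimality directly against an arbitrary competitor rather than a chosen minimal one, and it exposes the quantitative content (the weight $(n+d)/(n+1)$), which, as you note, is uniform in $n$ and $d$ and hence already gives the paper's general theorem.
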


\begin{prop}
Let $K$ be a function field or number field, and $p \in C$. Let $\varphi:\mathbb{P}^1\rightarrow\mathbb{P}^1$ be morphism of degree $d$. Let $[\mathbf{a},\mathbf{b}]$ be a presentation of $\varphi$. If $[\mathbf{a},\mathbf{b}]$ is semi-stable at $p$, and is not in $\mathrm{Rat}_d(\kappa(p))$,  then $\varphi$ has bad reduction at $p$. 
\end{prop}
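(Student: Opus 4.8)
The plan is to reduce the statement to a local minimality assertion and then read off bad reduction. Assume $[\mathbf{a},\mathbf{b}]$ is normalized at $p$. Since the reduction $[\mathbf{a}(p),\mathbf{b}(p)]$ does not lie in $\mathrm{Rat}_d(\kappa(p))$, the resultant vanishes on it, i.e.\ $\ord_p(\rho(\mathbf{a},\mathbf{b}))>0$. It therefore suffices to prove that this (semi-stable) presentation is minimal at $p$: if $(R_{\varphi,\Gamma})_p\ge \ord_p(\rho(\mathbf{a},\mathbf{b}))$ for every $\Gamma\in\GL_2(K)$ — where $(R_{\varphi,\Gamma})_p$ denotes the resultant order computed in the coordinates given by $\Gamma$, after renormalizing — then the minimal resultant at $p$ equals $\ord_p(\rho(\mathbf{a},\mathbf{b}))>0$, so no change of coordinates yields good reduction and $\varphi$ has bad reduction at $p$. (Since $(R_{\varphi,\Gamma})_p$ is unchanged under scaling $\Gamma$, ranging over $\GL_2(K)$ accounts for all choices of coordinates.)

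The engine of the minimality proof is the GIT property emphasized above. Because $[\mathbf{a}(p),\mathbf{b}(p)]\in(\mathbb{P}^N)^{ss}(\kappa(p))$, there is a homogeneous $\SL_2$-invariant $F\in A_d^{\SL_2}$ of some positive degree $w$ in the coefficients, defined over $\mathbb{Z}$, with $F(\mathbf{a}(p),\mathbf{b}(p))\neq0$; equivalently $\ord_p(F(\mathbf{a},\mathbf{b}))=0$. As an $\SL_2$-invariant of degree $w$, $F$ is a relative invariant for the full $\GL_2$-action, so $F(\mathbf{a}^\Gamma,\mathbf{b}^\Gamma)=(\det\Gamma)^{k}F(\mathbf{a},\mathbf{b})$ for a fixed integer $k$. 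I would pin down $k$ by restricting to scalar matrices: pre-composing by $\lambda I$ scales the degree-$d$ forms by $\lambda^{d}$ and post-composing by its adjoint scales them by a further $\lambda$, so the coefficients scale by $\lambda^{d+1}$ and $F$ by $\lambda^{(d+1)w}$; as $\det(\lambda I)=\lambda^{2}$, this forces $k=\tfrac{(d+1)w}{2}$. This is the same normalization that produces the resultant weight $d(d+1)$ of Proposition~\ref{prop495}(b) in the case $w=2d=\deg\rho$.

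Now fix $\Gamma\in\GL_2(K)$ and set $m'=\min\{\ord_p(\mathbf{a}^\Gamma),\ord_p(\mathbf{b}^\Gamma)\}$. Evaluating $F$ on the coefficients $\mathbf{a}^\Gamma,\mathbf{b}^\Gamma$ after dividing by a uniformizer to the power $m'$ gives an element of $O_{K,p}$, hence of nonnegative order; together with the transformation law and $\ord_p(F(\mathbf{a},\mathbf{b}))=0$ this gives $\tfrac{(d+1)w}{2}\ord_p(\det\Gamma)-w\,m'\ge0$, that is $m'\le\tfrac{d+1}{2}\ord_p(\det\Gamma)$. Feeding this into Proposition~\ref{prop495}(a) and (b) yields $(R_{\varphi,\Gamma})_p=\ord_p(\rho(\mathbf{a},\mathbf{b}))+d(d+1)\ord_p(\det\Gamma)-2d\,m'\ge\ord_p(\rho(\mathbf{a},\mathbf{b}))$, because $2d\,m'\le d(d+1)\ord_p(\det\Gamma)$ and the two $\det\Gamma$ contributions cancel exactly. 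This proves minimality, and the bad reduction statement follows as above.

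The balancing computation is forced once $k$ is correct, so the main obstacle is the GIT input, not the arithmetic. Concretely, I must guarantee that $\kappa(p)$-semistability of the reduction yields an invariant $F$ defined over $\mathbb{Z}$ (so that its reduction is literally $F(\mathbf{a}(p),\mathbf{b}(p))$) that is non-vanishing on the reduced point; this rests on the good base-change behavior of the invariant ring $A_d^{\SL_2}$ recorded in Proposition~\ref{defineMd} and needs a little care in positive characteristic in the function-field case. Given such an $F$, the weight identity and the order estimates are routine.
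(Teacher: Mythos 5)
Your proposal is correct and is essentially the paper's own argument: the paper deduces this proposition from its general theorem that a semi-stable presentation is minimal, and that theorem's proof uses exactly your ingredients --- an integral homogeneous $\mathrm{SL}$-invariant $f$ with $\mathrm{ord}_p(f(\mathbf{a},\mathbf{b}))=0$ supplied by semi-stability (via Proposition~\ref{defineMd} and base change of the invariant ring), integrality of $f$ evaluated at any competing normalized presentation, and cancellation of determinant weights against the resultant, yielding minimality and hence bad reduction once $\rho$ vanishes at the reduced point. The only difference is organizational: the paper packages the weight cancellation into the degree-zero $\mathrm{PGL}$-invariant function $\sigma=f^{\delta}/\rho^{m}$, so it never needs the explicit exponent $k=\tfrac{(d+1)w}{2}$ or the transformation formulas of Proposition~\ref{prop495}(a),(b), whereas you unwind the identical computation through the relative-invariant transformation law; both hinge on the same GIT input and the same valuation comparison.
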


We will now show the following generalization that implies both propositions: 

\begin{thm}
Let $K$ be a function field or a number field, and $p \in C$. Let $\varphi$ be a morphism of degree $d$. Let $[\mathbf{a}_0,\dots,\mathbf{a}_n]$ be a presentation of $\varphi$. If $[\mathbf{a}_0,\dots,\mathbf{a}_n]$ is semi-stable at $p$, then $[\mathbf{a}_0,\dots,\mathbf{a}_n]$ is minimal at $p$.
\end{thm}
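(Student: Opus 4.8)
The plan is to combine the concrete description of the semi-stable locus in Proposition~\ref{defineMd} with the transformation formulas of Proposition~\ref{prop495}. Choose normalized representatives $\mathbf{a}=(\mathbf{a}_0,\dots,\mathbf{a}_n)$, so that $\min\{\ord_p(\mathbf{a}_i)\}=0$ and hence $(R_{\varphi,[X_0,\dots,X_n]})_p=\ord_p(\rho(\mathbf{a}))=:r(I)$. A change of coordinates is represented by some $\Gamma\in\GL_{n+1}(K)$; write $m(\Gamma):=\min\{\ord_p(\mathbf{a}_0^\Gamma),\dots,\ord_p(\mathbf{a}_n^\Gamma)\}$ and let $r(\Gamma)$ denote the order of the resultant of the corresponding normalized presentation. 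By Proposition~\ref{prop495}(a) and (b),
$$
r(\Gamma)=\ord_p(\rho(\mathbf{a}^\Gamma))-(n+1)d^n\,m(\Gamma)=r(I)+(n+d)d^n\ord_p(\det \Gamma)-(n+1)d^n\,m(\Gamma).
$$
I would first observe that $r(\Gamma)$ depends only on the class of $\Gamma$ in $\PGL_{n+1}(K)$: rescaling $\Gamma$ shifts $m(\Gamma)$ and $\ord_p(\det \Gamma)$ by amounts that cancel in the formula, so it is harmless to range over $\GL_{n+1}(K)$ lifts. Minimality of the given presentation is then exactly the statement $r(\Gamma)\ge r(I)$ for all $\Gamma$, which reduces to the single inequality $(n+1)\,m(\Gamma)\le(n+d)\,\ord_p(\det \Gamma)$. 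The whole point is that Proposition~\ref{prop495} by itself does not supply this inequality, and this is precisely where semi-stability must enter.

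To produce the inequality I would invoke the defining property of the semi-stable locus recorded after Proposition~\ref{defineMd}: since $[\mathbf{a}(p)]\in(\mathbb{P}^N)^{ss}(\kappa(p))$, there is a homogeneous $\SL(n+1)$-invariant $F\in A_d=\mathbb{Z}[\mathbf{a}_0,\dots,\mathbf{a}_n]$ of some positive degree $e$ with $F(\mathbf{a}(p))\ne 0$, that is $\ord_p(F(\mathbf{a}))=0$. Two properties of $F$ then drive the argument. First, since $F$ is a single homogeneous polynomial of degree $e$ in the coordinates of $\mathbf{a}^\Gamma$, every monomial has order at least $e\,m(\Gamma)$, so $\ord_p(F(\mathbf{a}^\Gamma))\ge e\,m(\Gamma)$. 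Second, because $F$ is $\SL(n+1)$-invariant and every character of $\GL_{n+1}$ is a power of the determinant, $F(\mathbf{a}^\Gamma)=(\det \Gamma)^{w}F(\mathbf{a})$ for some integer $w$; evaluating on the central element $\Gamma=tI$, where precomposition scales each degree-$d$ coefficient by $t^d$, postcomposition by the adjoint $t^nI$ contributes a further $t^n$, and $\det(tI)=t^{n+1}$, forces $t^{(n+1)w}=t^{e(n+d)}$, hence $(n+1)w=e(n+d)$. As a sanity check, $\rho$ itself is such an invariant with $e=(n+1)d^n$, which reproduces the exponent $(n+d)d^n$ appearing in Proposition~\ref{prop495}(b).

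Combining the two properties with $\ord_p(F(\mathbf{a}))=0$ gives $e\,m(\Gamma)\le\ord_p(F(\mathbf{a}^\Gamma))=w\,\ord_p(\det \Gamma)$. Multiplying by $n+1$ and using $(n+1)w=e(n+d)$ yields $(n+1)\,e\,m(\Gamma)\le e(n+d)\,\ord_p(\det \Gamma)$, and cancelling $e>0$ produces exactly $(n+1)\,m(\Gamma)\le(n+d)\,\ord_p(\det \Gamma)$. Substituting this into the expression for $r(\Gamma)$ above gives $r(\Gamma)\ge r(I)$, which is the desired minimality. I would remark that the sign of $\ord_p(\det \Gamma)$ is irrelevant, since every step is a valid manipulation of valuations.

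The main obstacle I anticipate lies in the second step: extracting $F$ with the correct hypotheses and pinning down its $\det$-weight. One point needing care is that the invariant furnished by geometric semi-stability over $\overline{\kappa(p)}$ must be realized by an honest element of the invariant ring over $\mathbb{Z}$ that does not vanish at $[\mathbf{a}(p)]$; this is exactly what the $\mathrm{Proj}$ description in Proposition~\ref{defineMd} provides, but it should be stated cleanly, including why it suffices to test a single such invariant. The other is the weight identity $(n+1)w=e(n+d)$, where the central-torus normalization must be carried out carefully so that the whole argument uses only this integer identity and never an apparent fraction. Once $F$ and its weight are in hand, the remaining computation is the short piece of bookkeeping displayed above.
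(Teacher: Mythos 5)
Your proposal is correct, and its engine is the same as the paper's: semi-stability at $p$ supplies a homogeneous $\SL(n+1)$-invariant $F\in\mathbb{Z}[\mathbf{a}_0,\dots,\mathbf{a}_n]$ of positive degree with $\ord_p(F(\mathbf{a}))=0$, obtained exactly from the $\mathrm{Proj}$ description in Proposition \ref{defineMd}. Where you genuinely diverge is in how invariance is exploited. The paper never mentions determinant weights: it replaces $f$ by $f^{\delta}$ with $\delta=(n+1)d^n=\deg\rho$, forms the degree-zero function $\sigma=f^{\delta}/\rho^{m}$, notes that a degree-zero ratio of $\SL$-invariants is automatically $\PGL_{n+1}(K)$-invariant, and compares $\ord_p(\sigma)$ at the given presentation and at a normalized minimal competitor; the inequality comes from $\ord_p(f(\mathbf{a}'))\geq 0$. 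You instead keep $F$ itself, pin down its $\det$-weight, and derive the explicit inequality $(n+1)\,m(\Gamma)\leq(n+d)\,\ord_p(\det\Gamma)$ for every $\Gamma\in\GL_{n+1}(K)$, which plugged into Proposition \ref{prop495}(a),(b) gives $r(\Gamma)\geq r(I)$ directly. Your route is more quantitative: it never needs to choose a minimal competitor, and it isolates the precise valuative inequality that semi-stability buys; the cost is the weight bookkeeping that the paper's degree-zero trick hides.

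The one step you flag as delicate, the existence of an \emph{integer} $w$ with $F(\mathbf{a}^\Gamma)=(\det\Gamma)^w F(\mathbf{a})$, is true but does need an argument, since for an arbitrary homogeneous $\SL$-invariant of degree $e$ one must show $(n+1)\mid e(n+d)$. This is automatic: over an algebraically closed field write $\Gamma=c\Gamma_0$ with $c^{n+1}=\det\Gamma$ and $\Gamma_0\in\SL_{n+1}$, so $F(\mathbf{a}^\Gamma)=c^{e(n+d)}F(\mathbf{a})$; taking $\Gamma=\zeta I$ with $\zeta^{n+1}=1$ (so $\Gamma\in\SL_{n+1}$) forces $\zeta^{e(n+d)}=1$ for all such $\zeta$, hence $(n+1)\mid e(n+d)$, and the resulting identity has integer coefficients, so it holds over any base. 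Alternatively, and in exact parallel with the paper's replacement of $f$ by $f^{\delta}$, you can sidestep integrality entirely by replacing $F$ with $F^{n+1}$: this invariant has degree $(n+1)e$ and satisfies $F^{n+1}(\mathbf{a}^\Gamma)=(\det\Gamma)^{e(n+d)}F^{n+1}(\mathbf{a})$ with manifestly integral exponent, and your chain of inequalities then goes through verbatim.
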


\begin{proof}
Suppose we have a choice of coordinates $[X_0,\dots,X_n]$ and a corresponding presentation $[\mathbf{a}_0,\dots,\mathbf{a}_n]$ that is semi-stable at $p$, where we assume $(\mathbf{a}_0,\dots,\mathbf{a}_n)$ have been normalized at p. Recall that $\mathcal{M}_d^n$ is the moduli space of degree $d$ maps on $\mathbb{P}^n$ and $\mathcal{M}_d^n$ is an affine scheme over $\mathbb{Z}$ and $\varphi \in \mathcal{M}_d^n(K)$. Now $\mathcal{M}_d^{n,ss}$ is the quotient of the semi-stable space mentioned above.  It contains $\mathcal{M}_d^n$, and is a projective scheme over $\mathbb{Z}$. We write these schemes concretely: $\mathcal{M}_d^n = \mathrm{Spec}((A_d)_{(\rho)}^{\mathrm{SL}(n+1)})$ and $\mathcal{M}_d^{n,ss} = \mathrm{Proj} (A_d^{\mathrm{SL}(n+1)})$, where $A_d = \mathbb{Z}[\mathbf{a}_0,\dots,\mathbf{a}_n]$ as in Proposition \ref{defineMd}. Consider the image $x$ of the reduced point $[\mathbf{a}_0(p),\dots, \mathbf{a}_n(p)]$ in $\mathcal{M}_d^{n,ss}$ under the natural map:
$$ \pi: (\mathbb{P}^N)^{ss} \rightarrow \mathcal{M}_d^{n,ss}.$$
This $x$ corresponds to a homogeneous prime ideal $\mathfrak{p}_x$ of $A_d^{\mathrm{SL}(n+1)}$. Hence we can find an element $f \notin \mathfrak{p}_x$ where $f$ is homogeneous of degree $m > 0$. The element $f$ is a homogeneous $\mathrm{SL}_{n+1}$ invariant polynomial in $(\mathbf{a}_0,\dots,\mathbf{a}_n)$ and has the property that $f(\mathbf{a}_0(p),\dots, \mathbf{a}_n(p)) \neq 0$. It follows that $\mathrm{ord}_p(f(\mathbf{a}_0,\dots, \mathbf{a}_n)) = 0$. The $\mathrm{SL}_{n+1}$ invariance is over $\mathbb{Z}$, and so is valid over any field, since the ring of invariants is obtained by base extension. Now, replace $f$ by $f^\delta$, where $\delta=(n+1)d^n$ is the degree of the resultant.  All that has been stated about $f$ still applies and now $f$ is degree $\delta m$. Set $\sigma = \frac{f}{\rho^m}$. Then $\sigma$ is of total degree zero (in fact, $\sigma$ is a global section of $\mathcal{M}_d^n$).  Further, $\sigma$ is actually $\mathrm{GL}_{n+1}$ invariant, being the ratio of two homogeneous $\mathrm{SL}_{n+1}$ invariant functions. This implies the $\PGL_{n+1}$ action is well defined on $\sigma$ and $\sigma$ is invariant with respect to this action as well. Choose $[\Gamma] \in \mathrm{PGL}_{n+1}(K)$ such that $[\mathbf{a}_0^{\Gamma},\dots, \mathbf{a}_n^{\Gamma}]$ is minimal at $p$. Let $S = (R_{\varphi})_p.$ Let $S' = (R_{[X_0,\dots,X_n]})_p$. Our goal is to show that $S = S'$. Let $\Gamma$ have coordinates all in the local ring at $p$. Then $(\mathbf{a}_0^{\Gamma}\dots, \mathbf{a}_n^{\Gamma})$ has coordinates in the local ring and we normalize: $(\mathbf{a}_0',\dots, \mathbf{a}_n') = (c\mathbf{a}_0^{\Gamma},\dots, c\mathbf{a}_n^{\Gamma})$. Everything is still in the local ring at $p$, therefore $\mathrm{ord}_p( f(\mathbf{a}_0',\dots, \mathbf{a}_n')) := r >0$. Thus $\mathrm{ord}_p(\sigma(\mathbf{a}_0',\dots, \mathbf{b}_n')) = \mathrm{ord}_p(\frac{f(\mathbf{a}_0',\dots, \mathbf{a}_n')}{\rho(\mathbf{a}_0',\dots, \mathbf{a}_n')^m}) = r - mS$. On the other hand, by $\mathrm{PGL}_{n+1}(K)$ invariance, we also have $\mathrm{ord}_p(\sigma(\mathbf{a}_0',\dots, \mathbf{a}_n')) = \mathrm{ord}_p(\sigma(\mathbf{a}_0, \dots, \mathbf{a}_n))= \mathrm{ord}_p(\frac{f(\mathbf{a}_0,\dots, \mathbf{a}_n)}{\rho^m(\mathbf{a}_0,\dots, \mathbf{a}_n)}) = -mS'$. Thus:
\begin{align*}
 r-mS = -mS' &\\
  -mS \leq -mS' &\\
 \end{align*}
Hence $   S \geq S'$ and therefore $S = S'$.
\end{proof}

A corollary is that semi-stable bad reduction implies that bad reduction will continue to be present in all further base extensions.

\begin{cor}
\label{PGR}
Suppose $\varphi$ has a semi-stable presentation $[\mathbf{a}_0,\dots, \mathbf{a}_n]$ at $p$ with respect to $[X_0,\dots,X_n]$. Then $\varphi$ has good reduction at $p$ if and only if $\varphi$ has potential good reduction at $p$. 
\end{cor}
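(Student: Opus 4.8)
The plan is to dispose of the trivial direction and reduce the substantive one to the theorem just proved. If $\varphi$ has good reduction at $p$ over $K$, then taking $K' = K$ and $p' = p$ shows at once that $\varphi$ has potential good reduction, so only the converse needs work. Thus I would assume $\varphi$ has potential good reduction at $p$ and show that the given semi-stable presentation $[\mathbf{a}_0,\dots,\mathbf{a}_n]$ already witnesses good reduction over $K$, i.e.\ that $(R_\varphi)_p = 0$.

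First I would fix an algebraic extension $K'/K$ and a place $p'$ over $p$ at which $\varphi$, viewed over $K'$, has good reduction. Since good reduction is witnessed by the non-vanishing of a single invariant evaluated at finitely many coefficients, we may assume $K'/K$ is finite, with a well-defined ramification index $e = e(p'/p) \geq 1$. The crucial observation is that the presentation $[\mathbf{a}_0,\dots,\mathbf{a}_n]$, taken normalized at $p$, remains normalized and semi-stable when reduced over $K'$ at $p'$. Normalization persists because the reduction $[\mathbf{a}_0(p'),\dots,\mathbf{a}_n(p')]$ is just the image of $[\mathbf{a}_0(p),\dots,\mathbf{a}_n(p)]$ under the residue-field embedding $\kappa(p) \hookrightarrow \kappa(p')$, so the reduced coefficients still do not all vanish. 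Semi-stability persists for the same reason: $(\mathbb{P}^N)^{ss}$ is defined over $\mathbb{Z}$, and the non-vanishing of an $\mathrm{SL}_{n+1}$-invariant $f$ certifying semi-stability over $\kappa(p)$ is preserved over the larger field $\kappa(p')$.

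Next I would apply the Theorem over $K'$: since the presentation is semi-stable at $p'$, it is minimal at $p'$, so the minimal resultant of $\varphi$ over $K'$ at $p'$ is computed by this very presentation, $(R_\varphi)_{p'} = \mathrm{ord}_{p'}(\rho(\mathbf{a}_0,\dots,\mathbf{a}_n))$. Because $\rho(\mathbf{a}_0,\dots,\mathbf{a}_n)$ is a single element of $O_{K,p}$ and valuations scale by the ramification index under base extension, $\mathrm{ord}_{p'}(\rho(\mathbf{a}_0,\dots,\mathbf{a}_n)) = e\cdot\mathrm{ord}_p(\rho(\mathbf{a}_0,\dots,\mathbf{a}_n)) = e\cdot(R_\varphi)_p$, the last equality using minimality over $K$ (the Theorem again). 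Hence $(R_\varphi)_{p'} = e\cdot(R_\varphi)_p$. Good reduction at $p'$ forces the left-hand side to be $0$, and since $e \geq 1$ we conclude $(R_\varphi)_p = 0$, i.e.\ $\varphi$ has good reduction at $p$.

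The main obstacle, and the point where the Theorem carries the argument, is the claim that after base extension the semi-stable presentation is still \emph{minimal} over $K'$. A priori, passing to $K'$ enlarges the available coordinate changes, since one may now conjugate by matrices in $\mathrm{PGL}_{n+1}(K')$, and one might worry that some genuinely new choice over $K'$ drives the order of vanishing below $e\cdot(R_\varphi)_p$. The Theorem rules this out precisely because semi-stability is preserved under the extension and forces minimality among all $K'$-coordinate choices. Once this is secured, the scaling by the ramification index is routine and the equivalence follows.
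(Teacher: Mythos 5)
Your proof is correct and is essentially the paper's argument, just run in the direct rather than contrapositive direction: the paper notes that bad reduction at $p$ persists after base extension because the semi-stable presentation stays semi-stable (hence minimal, by the Theorem) and its resultant's order of vanishing stays positive, which is exactly your identity $(R_\varphi)_{p'} = e\cdot(R_\varphi)_p$ read the other way. The added details in your write-up (reduction to a finite extension, persistence of normalization, explicit ramification scaling) are correct elaborations of what the paper leaves implicit.
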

\begin{proof}
If $\varphi$ has bad reduction at $p$, then the order of vanishing of the minimal resultant is positive, and is realized by the presentation $[\mathbf{a}_0,\dots,\mathbf{a}_n]$. Upon base extension, the order of vanishing of the resultant for the presentation $[\mathbf{a}_0,\dots, \mathbf{a}_n]$ clearly remains positive, and the presentation remains semi-stable, and is therefore minimal. 
\end{proof}

\subsection{The Minimal presentation question}
In this section, we consider the question of whether it is always possible to find a global minimal presentation, answering the affirmative for the number field case. 

If one could find a global semi-stable presentation, it would follow, from the above, that this presentation is also minimal. As we have already noted, in \cite{Levy}, A. Levy shows that it is always possible, after base extension, to find a semi-stable presentation locally. The question of whether this can be done globally, however, is more difficult. Levy observes that this question is equivalent to that of determining the triviality of a vector bundle that is constructed naturally from the different choices of coordinates required to write a semi-stable presentation for each point. He gives counterexamples which demonstrate that it is not always possible to trivialize this vector bundle. This does not show that a minimal presentation is impossible, but it does mean that we can't always use semi-stability to find one. In the number field case, however, this is always possible:

\begin{thm}
Let $K$ be a number field, and $\varphi:\mathbb{P}^n\rightarrow\mathbb{P}^n$ a morphism. Then there is an algebraic extension $L$ of $K$ such that $\varphi$, considered as a morphism over $L$, has a minimal presentation.
\end{thm}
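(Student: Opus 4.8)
The plan is to reduce the existence of a global minimal presentation to the existence of a global \emph{semi-stable} presentation, and then to produce the latter after a finite base extension by killing a class-group obstruction. First I would note that it suffices to find, after some finite extension $L/K$, a single presentation of $\varphi$ over $L$ that is semi-stable at every point of $C_L=\mathrm{Spec}(O_L)$: by our main result that semi-stability at $p$ implies minimality at $p$, such a presentation is automatically minimal at every point, hence globally minimal. Moreover, for any fixed initial presentation the resultant is a nonzero element of $O_K$, so it is a unit at all but finitely many points; at those the presentation already has good reduction and is minimal. Thus only finitely many bad points $p_1,\dots,p_r$ require adjustment.

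Next I would invoke Levy's local result \cite{Levy}: after a finite base extension $K_1/K$ (take the compositum of the finitely many extensions needed at $p_1,\dots,p_r$), there is at each point of $C_1=\mathrm{Spec}(O_{K_1})$ a local change of coordinates making $\varphi$ semi-stable there, with the standard coordinates retained at the good points. As Levy observes, the obstruction to assembling these local choices into a single \emph{global} semi-stable presentation is exactly the triviality of a rank-$(n+1)$ vector bundle $\mathcal{E}$ on $C_1$ built from the local coordinate changes. This is the step I expect to be the main obstacle, and it is the point at which the number-field hypothesis becomes essential.

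The decisive observation is that $C_1=\mathrm{Spec}(O_{K_1})$ is the spectrum of a Dedekind domain whose Picard group $\mathrm{Pic}(C_1)=\mathrm{Cl}(K_1)$ is \emph{finite}, precisely the feature that fails for function fields and accounts for Levy's counterexamples there. A vector bundle on $C_1$ is a finitely generated projective $O_{K_1}$-module, and by the Steinitz structure theorem $\mathcal{E}\cong O_{K_1}^{\,n}\oplus I$ with $[I]=[\det\mathcal{E}]\in\mathrm{Cl}(K_1)$; hence $\mathcal{E}$ is trivial if and only if this single ideal class vanishes. Letting $m$ be its order in the finite group $\mathrm{Cl}(K_1)$ and writing $I^m=(\alpha)$, I would set $L=K_1(\alpha^{1/m})$ and $\beta=\alpha^{1/m}$; then $(IO_L)^m=(\alpha)O_L=(\beta)^m$, and unique factorization of ideals in $O_L$ forces $IO_L=(\beta)$ to be principal. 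Therefore $\mathcal{E}\otimes_{O_{K_1}}O_L$ is free, so $\mathcal{E}$ trivializes over the finite extension $L$. (One may alternatively pass to the Hilbert class field of $K_1$ and cite the Principal Ideal Theorem, but the root extension above is elementary and self-contained.)

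Finally, the trivialization of $\mathcal{E}$ over $L$ yields a single global $[\Gamma]\in\mathrm{PGL}_{n+1}(L)$ whose presentation is semi-stable at each of the bad points $p_1,\dots,p_r$; at the good points $\Gamma$ acts by an integral unit, so by Proposition \ref{prop495}(c) good reduction is preserved there. Since semi-stability is stable under field extension (the ring of invariants is defined over $\mathbb{Z}$, as used in the proof of the main theorem), this presentation is minimal at every point of $C_L$ by the first step, giving the desired global minimal presentation over $L$. I would close by emphasizing that the entire argument hinges on the finiteness of $\mathrm{Cl}(K_1)$ and the capitulation $L=K_1(\alpha^{1/m})$, which is exactly where the number-field setting succeeds and the function-field argument breaks down.
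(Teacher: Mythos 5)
Your proposal is correct and follows essentially the same route as the paper: reduce to finding a global semi-stable presentation via the main theorem, invoke Levy's local result to get a finite extension with local semi-stable coordinates, trivialize the resulting vector bundle over $\mathrm{Spec}(\mathcal{O}_{K_1})$ using the Steinitz decomposition $\mathcal{O}_{K_1}^{\,n}\oplus I$ together with finiteness of the class group and adjunction of an $m$-th root, and then check that the global coordinate choice preserves semi-stability at every point. Your capitulation argument ($I^m=(\alpha)$, $L=K_1(\alpha^{1/m})$, unique factorization of ideals) is just a slightly more explicit rendering of the paper's ``introducing the appropriate $m$-th roots'' step.
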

\begin{proof}
In \cite{Levy}, it is shown that a semi-stable presentation is possible everywhere locally, after a finite algebraic extension. Since there are only finitely many points of bad reduction, this implies that we can find an algebraic extension $K'$ for which every point has a semi-stable presentation. Considering all of the required choices of coordinates for these presentations together, one gets a vector bundle over $\mathrm{Spec}(\mathcal{O}_{K'})$. 

One can trivialize any line bundle over $\mathrm{Spec}(\mathcal{O}_{K'})$, after an appropriate finite degree base extension. This can be shown using class field theory (the principal ideal theorem), or more directly using the finiteness of the class group. Given a locally free sheaf $\mathcal{L'}$, consider the isomorphism $\mathcal{L'}^m \cong \mathcal{O}_{\mathrm{Spec}(\mathcal{O}_{K'})}$, which exists for some $m$. Introducing the appropriate $m$-th roots, this isomorphism will induce an isomorphism  $\mathcal{L''} \cong \mathcal{O}_{\mathrm{Spec}(\mathcal{O}_{L})}$, where $L$ is the field formed by adjoining these elements, and $\mathcal{L''} = \mathcal{L'} \otimes_{\mathcal{O}_{K'}} \mathcal{O}_{L}$.

For a vector bundle over $\mathrm{Spec}(\mathcal{O}_{K'})$ of arbitrary finite dimension $n$, consider the associated projective module. A standard result in commutative algebra says that this will be of the form $\mathcal{O}_{K'}^{n-1} \oplus I$, where $I$ is an ideal of $\mathcal{O}_{K'}$. Trivializing $I$ as above, we see that vector bundles of arbitrary finite dimension can be trivialized after finite base extension. 

Thus, returning to the vector bundle that arises from our situation, we pass to an algebraic extension $L$ which trivializes this vector bundle. We now have a (finite) collection of affine open sets $U_i = \mathrm{Spec}(R_i)$ in $\mathrm{Spec}(\mathcal{O}_{L})$. For each $i$, there exists a choice of coordinates $(X_{0,i},\dots,X_{n,i})$ on $\mathbb{A}^{N+1}_{R_i}$ such that the morphism $\varphi$ (now thought of as over $L$) has semi-stable reduction with respect to the associated projective coordinates $[X_{0,i},\dots,X_{n,i}]$ on $\mathbb{P}^N_L$.  The triviality of this vector bundle implies that there is a single global choice of coordinates $(X_0,\dots,X_n)$ on $\mathbb{A}^{N+1}_{\mathcal{O}_L}$ such that, on each $U_i$, this global choice of coordinates is related to $(X_{0,i},\dots, Y_{n,i})$ by a matrix $\Gamma_i \in \mathrm{GL}_{n+1}(R_i)$. Note that this matrix is normalized and has good reduction at every point in $U_i$. 

For any $p \in \mathrm{Spec}(\mathcal{O}_L)$, chose normalized coordinates $(\mathbf{a}_0,\dots \mathbf{a}_n)$. In general, acting on coordinates $(\mathbf{a}_0,\dots, \mathbf{a}_n)$ by a matrix whose coordinates are normalized at $p$, and which has good reduction at a point $p$, will preserve the property of the coordinates $(\mathbf{a}_0, \dots,\mathbf{a}_n)$ being normalized, by proposition \ref{prop495}.
 It is clear from this that the reductions of $[\mathbf{a}_0,\dots, \mathbf{a}_n]$ and $[\mathbf{a}_0^{\Gamma_i},\dots, \mathbf{a}_n^{\Gamma_i}]$ at a point $p \in U_i$ are conjugate over the residue field at $p$, and thus the property of being semi-stable at $p$ is preserved by this conjugation. From this it follows that we have found a global semi-stable presentation over the field $L$.
\end{proof}

Now, Levy provides counterexamples that show a trivial vector bundle is not always possible in the function field setting. These examples thus also show it is not always possible to find a globally semi-stable presentation. However, the question of a globally minimal presentation is still open in these cases. It might be interesting to examine the vector bundle associated to the minimal presentations given by the algorithm in \cite{BruinMolnar} in these examples.


\begin{thebibliography}{10}

\bibitem{BruinMolnar} N. Bruin, A. Molnar.
\newblock Minimal models for rational functions in a dynamical setting.
\newblock arXiv:1204.4967v1

\bibitem{Levy} A. Levy
\newblock The Semistable Reduction Problem for the Space of Morphisms on $\mathbb{P}^n$.
\newblock  arXiv:1104.4517v2

\bibitem{Levy2} A. Levy.
\newblock The Space of Morphisms on Projective Space.
\newblock Acta Arith. 146 (2011), 13Ð31.

\bibitem{Levy3} A. Levy.
\newblock Moduli Spaces of Dynamical Systems on $\mathbb{P}^n$. 
\newblock Ph.D. Thesis, Columbia University, 2011. http://academiccommons.columbia.edu/catalog/ac:132314

\bibitem{MFK}  D. Mumford, J. Fogarty and F. Kirwan.
\newblock Geometric Invariant Theory, Third Enlarged Edition.
\newblock Springer-Verlag, New York, 1994. Ergebnisse der Mathematik und ihrer Gren-
zgebiete, No. 34.

\bibitem{PST} C. Petsche, L. Szpiro, and M. Tepper.
\newblock Isotriviality is equivalent to potential good reduction for endomorphisms of $\mathbb{P}^n$ over function fields.
\newblock Journal of Algebra 322 (2009), 3345Ð3365.

\bibitem{Seshadri} C. Seshadri. 
\newblock Geometric Reductivity Over Arbitrary Base.
\newblock Advances in Mathematics, 26, 225-274 (1977).

\bibitem{SilvermanDynamics} J. Silverman.
\newblock The Arithmetic of Dynamical Systems.
\newblock Springer-Verlag, New York, 2007. Graduate Texts in Mathematics, No. 241.

\bibitem{SilvermanSpace} J. Silverman.
\newblock The space of rational maps on $\mathbb{P}^1$.
\newblock Duke Math. J., Volume 94, Number 1 (1998), 41-77.

\bibitem{STW} L. Szpiro, M. Tepper, P. Williams.
\newblock Resultant and conductor of geometrically semi-stable self maps of the projective line over a number field or function field.
\newblock arXiv:1010.5030v5


\end{thebibliography}
\end{document}